\date{}
\renewcommand{\uppercasenonmath}[1]{}
\numberwithin{equation}{section} \theoremstyle{plain}
\newtheorem*{thm*}{Main Theorem}
\newtheorem{thm}{Theorem}
\newtheorem{cor}[thm]{Corollary}
\newtheorem*{cor*}{Corollary}
\newtheorem*{lem*}{Lemma}
\newtheorem*{que*}{Question}
\newtheorem{prop}[thm]{Proposition}
\newtheorem*{prop*}{Proposition}
\newtheorem{rem}[thm]{Remark}
\newtheorem*{rem*}{Remark}
\newtheorem{exa}[thm]{Example}
\newtheorem*{exa*}{Example}
\newtheorem{df}[thm]{Definition}
\newtheorem*{df*}{Definition}
\newtheorem*{conj*}{Conjecture}
\newtheorem*{ack*}{ACKNOWLEDGEMENTS}
\newcommand{\pf}{\noindent\begin {proof}}
\newcommand{\epf}{\end{proof}}
\begin{document}
\begin{center}
{\large  \bf Two questions of L. Va${\rm \check{\textbf{s}}}$ on
$*$-clean rings}

\vspace{0.8cm} {\small \bf Jianlong Chen and Jian Cui\\
 \vspace{0.6cm} {\rm
Department of Mathematics, Southeast University
 \\ Nanjing 210096, P.R. China}\\}

 E-mail: {\it jlchen@seu.edu.cn, seujcui@126.com}

 \end{center}


\bigskip
\centerline { \bf  ABSTRACT}
 \bigskip
\leftskip10truemm \rightskip10truemm

\noindent A ring $R$ with an involution $*$ is called
(strongly) $*$-clean if every element of $R$ is the sum of a unit
and a projection (that commute). All $*$-clean rings are clean.
Va${\rm \check{s}}$ [L. Va${\rm \check{s}}$, $*$-Clean rings; some
clean and almost clean Baer $*$-rings and von Neumann algebras, J.
Algebra 324 (12) (2010) 3388-3400] asked whether there exists a
$*$-ring that is clean but not $*$-clean and whether a unit regular
and $*$-regular ring is strongly $*$-clean. In this paper, we answer
both questions by several examples. Moreover, some characterizations
of unit regular and $*$-regular rings are provided. \leftskip0truemm \rightskip0truemm
 \\\\{\it Keywords}: $*$-Clean ring; Strongly $*$-Clean ring; $*$-Regular ring; Strongly clean ring; Unit regular ring
 \\\noindent {\it Mathematics Subject
Classification}: 16U99, 16W10, 16W99.

\bigskip
\section { \bf INTRODUCTION}
\bigskip

\indent Rings in which every element is the product of a unit and an
idempotent are said to be \emph{unit regular}, and have been
extensively studied. As a result due to Camillo and Khurana
\cite{Cam}, every element of a unit regular ring can also be written
as the sum of a unit and an idempotent.
 Recall that an element $a$ of a ring $R$ is
\emph{clean} if $a=e+u$ where $e^2=e\in R$ and $u$ is a unit of $R$,
and $R$ is called \emph{clean} if every element of $R$ is clean.
Clean rings were introduced by Nicholson \cite{Nicholson1977} in
relation to exchange rings. In 1999, Nicholson \cite{Nicholson99}
called an element of a ring $R$ \emph{strongly clean} if it is the
sum of a unit and an idempotent that commute with each other, and
$R$ is \emph{strongly clean} if each of its elements is strongly
clean. Clearly, a strongly clean ring is clean, and the converse
holds for abelian rings (that is, all idempotents in the ring are
central). Local rings and strongly $\pi$-regular rings are
well-known examples of strongly clean rings.

A ring $R$ is a \emph{$*$-ring} (or \emph{ring with involution}) if
there exists
an operation $*:R\rightarrow R$ such that for all $x,~y\in R$\\
$ \indent\indent\indent\indent\indent\indent(x+y)^*=x^*+y^*, \ \
(xy)^*=y^*x^*, \ \ ~{\rm and}
~(x^*)^*=x. $\\
 Clearly, $1^*=1$ and $0^*=0$ in any $*$-ring. An
element $p$ of a $*$-ring $R$ is said to be a \emph{projection} if
$p^2=p=p^*$. Recently, Va${\rm \check{s}}$ \cite{Va} introduced the
concepts of a $*$-clean ring and a strongly $*$-clean ring.
Following \cite{Va}, an element of a $*$-ring $R$ is called
(\emph{strongly}) \emph{$*$-clean} if it can be expressed as the sum
of a unit and a projection (that commute), and $R$ is called
\emph{$*$-clean} (\emph{resp., strongly $*$-clean}) in case all of
its elements are $*$-clean (resp., strongly $*$-clean). Strongly
$*$-clean rings are strongly clean and $*$-clean, and $*$-clean
rings are clean, but it is a question that whether there is an
example of a $*$-ring that is clean but not $*$-clean \cite{Va}.
According to \cite[Proposition 3, p. 229]{Ber72}, a $*$-ring $R$ is
\emph{$*$-regular} if one of the following equivalent conditions
hold: (1) $R$ is a (von Neumann) regular and Rickart $*$-ring (
i.e., the right annihilator of each element is generated by a
projection); (2) $R$ is regular and the involution is proper (that
is, $x^*x=0$ implies $x=0$ for any $x\in R$); $(3)$ for every $x$ in
$R$ there exists a projection $p$ such that $xR=pR.$ It was shown in
\cite{Va} that every $*$-abelian (i.e., a $*$-ring in which every
projection is central) and $*$-regular ring is strongly $*$-clean.
Va${\rm \check{s}}$ asked whether a unit regular and $*$-regular
ring is also strongly $*$-clean.

In this paper, examples of $*$-rings are provided to answer both
questions of Va${\rm \check{s}}$. Some properties of (strongly)
$*$-clean rings are investigated. In particular, we show that in
$*$-rings setting, a strongly clean ring is strongly $*$-clean iff
the set of all projections coincides with the set of all
idempotents. Several characterizations of unit regular and
$*$-regular rings are given.

Rings considered in this paper are associative with unity. The
notation $*$ denotes an involution of a given ring. The set of all
idempotents, all projections and all units of a ring $R$ are denoted
by $Id(R)$, $P(R)$ and $U(R)$, respectively. The symbol $l(X)$
(resp., $r(X)$) stands for the left (resp., right) annihilator of a
subset $X\subseteq R.$ We write $M_n(R)$ for the ring of all
$n\times n$ matrices over $R$.

\bigskip
\section { \bf  Main Results}
\bigskip
Let $R$ be a $*$-ring and $p\in P(R).$ The involution $*$ of $R$ is
inherited naturally to the corner ring $pRp$.

\begin{thm}\label{1}
Let $R$ be a $*$-ring and $p\in P(R).$ Then $a\in pRp$ is strongly
$*$-clean in $R$ if and only if $a$ is strongly $*$-clean in $pRp.$
\end{thm}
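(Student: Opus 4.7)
The plan is to prove both implications constructively, transferring the $*$-clean decomposition between $R$ and $pRp$ by using $p$ as a bridge. The critical step is to show that $p$ commutes with the projection $q$ appearing in any strongly $*$-clean decomposition of $a\in pRp$ inside $R$.

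The $(\Leftarrow)$ direction is a short lift. Given a decomposition $a=q'+u'$ in $pRp$, I would set $q:=q'+(1-p)$ and $u:=u'-(1-p)$. Orthogonality of $q'$ and $1-p$ (since $q'\in pRp$ forces $q'(1-p)=(1-p)q'=0$) makes $q$ a projection in $R$; the inverse of $u$ in $R$ is $(u')^{-1}-(1-p)$, where $(u')^{-1}$ denotes the inverse inside $pRp$; and $qu-uq=q'u'-u'q'=0$ because $q'$ and $u'$ commute.

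The $(\Rightarrow)$ direction hinges on proving $pq=qp$. Write $a=q+u$ with $q,u$ commuting, so that $qa=aq$ and $qu^{-1}=u^{-1}q$. Since $a\in pRp$, the relations $a(1-p)=(1-p)a=0$ yield
\[
(1-p)q=-(1-p)u,\qquad q(1-p)=-u(1-p).
\]
Multiplying by $u^{-1}$, swapping $q$ and $u^{-1}$, and then multiplying by $q$ while using $q^{2}=q$, I expect everything to collapse to $(1-p)q=1-p$ and $q(1-p)=1-p$, each of which is equivalent to $pq=qp=p+q-1$.

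Once $pq=qp$ is in hand, I take $q':=pq=qp=pqp$. This is a projection in $pRp$: self-adjoint because $(pq)^{*}=q^{*}p^{*}=qp=pq$, and idempotent because $(pq)^{2}=p^{2}q^{2}=pq$ after using the commutation. The complement $u':=a-q'=pu=up$ lies in $pRp$ and is inverted there by $pu^{-1}p$, since $p$ commutes with $u$ and hence with $u^{-1}$. The equality $q'u'=u'q'$ then drops out of $qu=uq$ after inserting factors of $p$. The only real obstacle is the identity $pq=qp$; everything else is a short verification. It is worth noting that self-adjointness of $q$ is never used to derive $pq=qp$ — the same argument works for any strongly clean decomposition — and enters only at the very end, to ensure that the idempotent $q'=pq$ is itself a projection.
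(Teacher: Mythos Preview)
Your argument is correct and structurally parallel to the paper's. The $(\Leftarrow)$ lift via $q=q'+(1-p)$, $u=u'-(1-p)$ is exactly what the paper does. For $(\Rightarrow)$, both proofs reduce everything to establishing $pq=qp$ (equivalently $(1-p)q=q(1-p)=1-p$) and then set $q'=pqp$, $u'=pup$; the subsequent verifications are identical.

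The one genuine difference is how $pq=qp$ is obtained. The paper invokes an external result of Nicholson (\cite[Theorem 2]{Nicholson99}): for any strongly clean decomposition $a=e+u$ one has $r(a)\subseteq eR$ and $l(a)\subseteq Re$, so $1-p\in r(a)\cap l(a)\subseteq eR\cap Re$, forcing $e(1-p)=(1-p)e=1-p$. Your route is self-contained: from $(1-p)q=-(1-p)u$ you multiply by $u^{-1}$ on the right, commute $q$ past $u^{-1}$, then multiply by $q$ and use $q^2=q$ to get $(1-p)u^{-1}q=-(1-p)q$; comparing with $(1-p)u^{-1}q=-(1-p)$ yields $(1-p)q=1-p$, and symmetrically on the other side. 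This is a clean elementary replacement for the citation, and your closing observation---that self-adjointness of $q$ plays no role in the commutation step---is exactly why the paper can appeal to a purely strongly-clean lemma here. In a write-up you should spell out the ``I expect everything to collapse'' step as above rather than leave it as an expectation.
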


\begin{proof}
Assume that $a=e+u$ is strongly $*$-clean in $pRp$ with $e\in
P(pRp),$ $u\in U(pRp)$ and $ue=eu.$ Let $f=e+(1-p)$ and $v=u-(1-p)$.
Then $f\in P(R)$, $v\in U(R)$, and $f$ commutes with $v.$ So $a=f+v$
is strongly $*$-clean in $R.$

Conversely, suppose that $a\in pRp$ is strongly $*$-clean in $R$.
Let $a=e+u$ with $e\in P(R)$, $u\in U(R)$ and $ue=eu.$ Since
$a=pap,$ $1-p\in r(a)\cap l(a).$ By \cite[Theorem 2]{Nicholson99},
$r(a)\subseteq eR$ and $l(a)\subseteq Re.$ So we have $1-p\in eR\cap
Re=eRe,$ and then $(1-p)e=e(1-p),$ i.e., $ep=pe.$ This implies that
$pep\in Id(pRp).$ Note that both $e$ and $p$ are in $P(R).$ Thus
$pep\in P(pRp).$ Since $ap=pa$ and $ep=pe,$ $up=pu$. It follows that
$pup\in U(pRp),$ and $pep$ commutes with $pup$. Therefore,
$a=pep+pup$ is a strongly $*$-clean expression in $pRp.$
\end{proof}

\begin{cor}\label{2}
If $R$ is a strongly $*$-clean ring, then $pRp$ is strongly
$*$-clean for any $p\in P(R).$
\end{cor}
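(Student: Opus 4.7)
The plan is to derive this as an immediate consequence of Theorem \ref{1}. Given any $p \in P(R)$, I want to show every element of $pRp$ is strongly $*$-clean in $pRp$. So I would start by fixing an arbitrary $a \in pRp$.

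Since $pRp \subseteq R$ and $R$ is strongly $*$-clean by hypothesis, the element $a$ is strongly $*$-clean when viewed in $R$. Now I invoke the ``only if'' direction of Theorem \ref{1}: an element of $pRp$ that is strongly $*$-clean in $R$ is automatically strongly $*$-clean in $pRp$. Applied to $a$, this gives exactly what I want, so $pRp$ is a strongly $*$-clean ring.

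There is essentially no obstacle here, since Theorem \ref{1} already did all the work of producing the strongly $*$-clean decomposition inside the corner. The only small point worth noting is that the involution on $pRp$ is the one inherited from $R$ (as mentioned in the paragraph preceding Theorem \ref{1}), so the notion ``strongly $*$-clean in $pRp$'' is well-defined and matches what Theorem \ref{1} refers to. Accordingly, the proof can be written in one or two lines.
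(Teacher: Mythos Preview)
Your proposal is correct and is precisely the intended argument: the paper states Corollary~\ref{2} without proof, as it follows immediately from Theorem~\ref{1} in exactly the way you describe.
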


The following result, which reveals the relationship between strong
$*$-cleanness and strong cleanness, is crucial for constructing a
counter-example of a $*$-ring that is strongly clean but not
strongly $*$-clean.

\begin{thm}\label{4}
Let $R$ be a $*$-ring. Then $R$ is strongly $*$-clean if and only if
$R$ is strongly clean and $P(R)=Id(R).$
\end{thm}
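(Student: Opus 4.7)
The plan is to prove both implications of the biconditional; the $(\Leftarrow)$ direction is almost tautological, so the real content lies in showing $P(R)=Id(R)$ under the hypothesis of strong $*$-cleanness.

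For the easy direction, assume $R$ is strongly clean and $P(R)=Id(R)$. Given $a\in R$, take a strongly clean decomposition $a=e+u$ with $e\in Id(R)$, $u\in U(R)$, $eu=ue$. Since $Id(R)=P(R)$, the idempotent $e$ is automatically a projection, so this same expression is a strongly $*$-clean decomposition of $a$. No real work is needed here.

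For the forward direction, suppose $R$ is strongly $*$-clean. It is immediate from the definition that $R$ is strongly clean, and $P(R)\subseteq Id(R)$ always holds. The substantive step is the reverse inclusion. Given $e\in Id(R)$, write $e=p+u$ with $p\in P(R)$, $u\in U(R)$ and $pu=up$. The key trick is to use the identity $e^{2}=e$ to force the unit part. Expanding and using $p^{2}=p$ together with $pu=up$, one obtains $p+2up+u^{2}=p+u$, hence $u(2p+u)=u$; cancelling the unit $u$ on the left yields $2p+u=1$, so $u=1-2p$. Substituting back gives $e=1-p$, which is self-adjoint since $p$ is. Therefore $e\in P(R)$.

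The two directions combine to give the stated equivalence. I expect no serious obstacle: the whole argument rests on the small algebraic observation that, within any strongly $*$-clean decomposition of an idempotent, the commutation relation together with $e^{2}=e$ rigidly determines the unit part as the self-inverse symmetry $1-2p$, and this calculation is characteristic-free (when $2=0$ it simply collapses to $u=1$ and $e=1+p$, still a projection). The rest of the proof is bookkeeping with the definitions.
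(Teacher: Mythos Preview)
Your argument is correct, and in fact your forward direction is cleaner than the paper's. The paper handles $Id(R)\subseteq P(R)$ by a case split: after writing $e=p+u$ with $pu=up$, it disposes of $p=0$ and $p=1$ separately, and for $p\neq 0,1$ it multiplies through by $p$ and by $1-p$, observing that $(1-e)p=-up$ lies in $U(pRp)\cap Id(pRp)=\{p\}$ and $e(1-p)=u(1-p)$ lies in $U((1-p)R(1-p))\cap Id((1-p)R(1-p))=\{1-p\}$, whence $ep=0$ and $e(1-p)=1-p$, so $e=1-p$. You reach the same conclusion $e=1-p$ by a single line: expanding $e^{2}=e$ gives $u(2p+u)=u$, and cancelling the unit yields $u=1-2p$. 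Your route avoids the corner-ring machinery and the case analysis entirely; the paper's route, on the other hand, makes visible the general principle that in a nontrivial ring the only idempotent unit is $1$, which is what drives several other arguments in the paper. Both are short, but yours is the more self-contained computation.
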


\begin{proof}
Suppose that $R$ is strongly $*$-clean. The strong cleanness of $R$
is clear. For any $e^2=e\in R,$ we have $e=p+u$ where $p\in
P(R),~u\in U(R)$ and $e,~p$ and $u$ commute with each other. If
$p=0$ then $e=1,$ and if $p=1$ then $e=0.$ Notice that both $0$ and
$1$ are contained in $P(R).$ We may assume that $p\neq 0$ and $p\neq
1.$ Then $pRp$ and $(1-p)R(1-p)$ are nonzero $*$-rings. Now,
multiplying $e=p+u$ by $p$ yields $ep=p+up.$ It follows that
$-up=p-ep=(1-e)p\in U(pRp)\cap Id(pRp)=\{p\}$. Thus $ep=0.$
Analogously, by multiplying $1-p$ on both sides of $e=p+u$ we obtain
that $e(1-p)=u(1-p)\in U[(1-p)R(1-p)]\cap Id[(1-p)R(1-p)]=\{1-p\}$.
So one has $e(1-p)=1-p.$ Since $ep=0,$ $e=1-p.$ Clearly, $e=e^*.$
This proves that $Id(R)=P(R).$ The other direction is trivial.
\end{proof}

Due to \cite{Va}, if $R$ is a $*$-ring, the ring $M_n(R)$ has a
natural involution inherited from $R:$ if $A=(a_{ij})\in M_n(R),$
$A^*$ is the transpose of $(a_{ij}^*).$ Thus $M_n(R)$ is also a
$*$-ring. It was shown that $M_n(R)$ is a $*$-clean ring whenever
$R$ is $*$-clean \cite[Proposition 4]{Va}. By Theorem \ref{4}, we
have the following result.

\begin{cor}\label{3}
Let $R$ be a $*$-ring. Then $M_n(R)$ is not strongly $*$-clean for
$n\geq 2.$
\end{cor}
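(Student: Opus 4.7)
The plan is to apply Theorem~\ref{4} in the contrapositive: to prove that $M_n(R)$ is not strongly $*$-clean, it suffices to exhibit an element of $Id(M_n(R))$ that does not lie in $P(M_n(R))$, i.e., an idempotent matrix that is not fixed by the transpose-involution inherited from $R$.

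The most natural candidate is the matrix $E \in M_n(R)$ whose $(1,1)$ and $(1,2)$ entries are $1$ and whose remaining entries are $0$. First I would verify $E^2 = E$ by a direct block computation (only the first row is nonzero, and multiplying from the left by $E$ reproduces that row). Then I would compute $E^*$, which has a $1$ in positions $(1,1)$ and $(2,1)$ and zeros elsewhere. Since $1 \neq 0$ in $R$, the entries $(1,2)$ of $E$ and $E^*$ differ, so $E^* \neq E$. Hence $E \in Id(M_n(R)) \setminus P(M_n(R))$.

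With this in hand, Theorem~\ref{4} immediately gives the conclusion: if $M_n(R)$ were strongly $*$-clean, then $Id(M_n(R)) = P(M_n(R))$, contradicting the existence of $E$. The case $n \geq 3$ requires no extra argument because the same $E$, padded with zeros in rows and columns $3, \dots, n$, is still an idempotent that fails to be self-adjoint for exactly the same reason.

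I do not anticipate a real obstacle; the only subtlety is that one must produce $E$ inside $M_n(R)$ itself rather than merely appealing to the $n=2$ case, since strong $*$-cleanness is not known a priori to pass to matrix rings in a way that would let us reduce $n > 2$ to $n = 2$. Exhibiting the single matrix $E$ for every $n \geq 2$ sidesteps this issue entirely.
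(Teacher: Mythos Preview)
Your proposal is correct and follows exactly the approach the paper intends: the paper simply writes ``By Theorem~\ref{4}'' and leaves the existence of a non-self-adjoint idempotent in $M_n(R)$ implicit, whereas you spell out the concrete witness $E$ and verify $E^2=E$ and $E^*\neq E$. There is nothing to add; your remark that working with $E$ directly in $M_n(R)$ for all $n\geq 2$ avoids any appeal to reduction to the $2\times 2$ case is a nice touch but not strictly needed, since the same $E$ embeds in every $M_n(R)$.
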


Note that a local ring $R$ with an involution $*$ is always strongly
$*$-clean. So $M_n(R)$ is $*$-clean, but it is not strongly
$*$-clean when $n\geq 2$. By \cite[Corollary 1.9]{Chen1}, there
exists a commutative local ring $R$ such that $M_2(R)$ is not
strongly clean. Va${\rm \check{s}}$ \cite{Va} asked whether there is
an example of a $*$-ring that is clean but not $*$-clean. We answer
this question affirmatively by the following example.

\begin{exa}\label{5}
Let $R=\mathbb{Z}_2\oplus \mathbb{Z}_2,$ where $\mathbb{Z}_2$ is the
ring of integers modulo $2$. It is clear that $R$ is a boolean ring.
Thus $R$ is strongly clean and $R=Id(R)$. Define a map $*:
R\rightarrow R$ by $(a,b)^*=(b,a)$. Since $R$ is commutative, it is
easy to check that $*$ is an involution of $R.$ Note that
$P(R)=\{(0,0),(1,1)\}\neq Id(R).$ In view of Theorem \ref{4}, $R$ is
not strongly $*$-clean, and thus not $*$-clean because $R$ is
commutative.
\end{exa}

\begin{rem}
Example \ref{5} shows that strongly clean $*$-rings need not be
$*$-clean. The following implications hold $($for the class of
$*$-rings$):$
$$\xymatrix
@R=2.5em @C=2em @M=0pt{strongly~*-clean~ring \ar@{=>}[d]\ar@{=>}[r] & ~*-clean~ring\ar@{=>}[d]\\
strongly~clean~ring\ar@{=>}[r]&~clean~ring }$$ In the table above,
each of the implications is irreversible, and there are no other
implications between these rings.
\end{rem}

Recall that a ring $R$ is \emph{right P-injective} if $lr(a)=Ra$ for
each $a\in R$. Regular rings are clearly right P-injective.

\begin{prop}\label{10}
Let $R$ be a $*$-ring. Then the following are equivalent$:$\\
$(1)$ $R$ is regular and the involution is proper $($i.e., $R$ is $*$-regular$)$.\\
$(2)$ $R$ is right P-injective and the involution is proper.\\
$(3)$ For every $a\in R,$ $Ra=Ra^*a.$
\end{prop}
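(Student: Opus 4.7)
The plan is to establish the circle $(1)\Rightarrow(2)\Rightarrow(3)\Rightarrow(1)$. The first implication is essentially free: the paragraph preceding the statement already notes that regular rings are right P-injective, and properness of the involution passes across verbatim.

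For $(2)\Rightarrow(3)$, the key auxiliary observation I would record is that when the involution is proper, $r(a)=r(a^*a)$ for every $a\in R$. The nontrivial inclusion follows because $a^*ay=0$ forces $(ay)^*(ay)=0$, hence $ay=0$ by properness. Applying the P-injectivity identity $Ra=lr(a)$ to both $a$ and $a^*a$ then yields $Ra=lr(a)=lr(a^*a)=Ra^*a$.

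The substantive direction is $(3)\Rightarrow(1)$. Properness is immediate: if $a^*a=0$ then $Ra=Ra^*a=0$, and hence $a=0$. For regularity, condition (3) provides some $x\in R$ with $a=xa^*a$. The crux of the argument, and the step I expect to be the main obstacle to spot, is to apply the involution to this identity, obtaining $a^*=a^*ax^*$, and then resubstitute back into the original equation:
\[ a=xa^*a=x(a^*ax^*)a=(xa^*a)x^*a=ax^*a. \]
Thus $x^*$ serves as an inner inverse for $a$, so $R$ is von Neumann regular, which together with properness gives $*$-regularity.

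Once this substitution trick is in hand, the rest reduces to routine annihilator arithmetic, so I do not anticipate any further technical difficulty. The whole equivalence should fit in a short proof with this $r(a)=r(a^*a)$ lemma playing the role of the workhorse for both nontrivial implications.
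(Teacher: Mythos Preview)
Your proposal is correct and follows essentially the same route as the paper: the same cycle $(1)\Rightarrow(2)\Rightarrow(3)\Rightarrow(1)$, the same annihilator identity $r(a)=r(a^*a)$ feeding into $lr(\cdot)$ for $(2)\Rightarrow(3)$, and the same substitution trick (apply $*$ to $a=xa^*a$ and resubstitute) to exhibit $x^*$ as an inner inverse for $(3)\Rightarrow(1)$. The only cosmetic difference is that the paper handles regularity before properness in the last implication, whereas you do them in the opposite order.
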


\begin{proof}
$(1)\Rightarrow(2)$ is clear.

$(2)\Rightarrow(3).$ Given any $a\in R.$ Let $y\in r(a^*a).$ We have
$a^*ay=0,$ and it follows that $0=y^*a^*ay=(ay)^*(ay).$ Since the
involution is proper, $ay=0,$ i.e., $y\in r(a).$ Thus,
$r(a^*a)=r(a).$ By the right P-injectivity of $R$, we obtain
$Ra=lr(a)=lr(a^*a)=Ra^*a.$

$(3)\Rightarrow(1).$ For any $a\in R,$ there exists $t\in R$ such
that $a=ta^*a$. Then
$at^*a=(ta^*a)t^*a=t(a^*at^*)a=t(ta^*a)^*a=ta^*a=a.$ This proves
that $R$ is a regular ring. We finish by proving that the involution
is proper. Let $x^*x=0$ with $x\in R$. By $(3)$, $Rx=Rx^*x=0,$ so
$x=0,$ as desired.
\end{proof}

A ring $R$ is called \emph{strongly regular} if it is an abelian
regular ring, or equivalently, for any $a\in R,$ $a=eu=ue$ for $e\in
Id(R)$ and $u\in U(R)$ \cite{Nicholson99}.

\begin{prop}\label{11}
Let $R$ be a $*$-ring. Then the following are equivalent$:$\\
$(1)$ $R$ is strongly regular and the involution is proper.\\
$(2)$ $R$ is strongly regular and $P(R)=Id(R).$\\
$(3)$ $R$ is $*$-abelian and for every $a\in R$, $a=p+u$ with $aR
\cap pR=0$ where $p\in P(R)$ and $u\in U(R).$\\
$(4)$ For every $a\in R,$ $a=pu=up$ for some $p\in P(R)$ and $u\in
U(R).$
\end{prop}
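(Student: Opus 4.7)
The plan is to prove $(1) \Leftrightarrow (2)$, $(1) \Leftrightarrow (4)$, and $(1) \Leftrightarrow (3)$ separately, systematically using that a strongly regular ring is abelian and hence every idempotent is central. For $(1) \Rightarrow (2)$, given $e = e^2 \in R$, both $e$ and $e^*$ are central; a short expansion of $(e - ee^*)^*(e - ee^*)$ collapses to $0$ using $e^2 = e$, $(e^*)^2 = e^*$, and centrality, so properness forces $e = ee^*$, and applying $*$ then gives $e = e^*$. For $(2) \Rightarrow (1)$, if $x^*x = 0$, I write $x = eu = ue$ with $e \in P(R)$ central and $u \in U(R)$; then $x^*x = u^*eu = 0$ forces $e = 0$ and hence $x = 0$. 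The same arguments handle $(1) \Leftrightarrow (4)$: the strongly regular factorization combined with $(2)$ gives $(4)$, while $(4) \Rightarrow (1)$ yields strong regularity immediately and properness by the same $u^*pu$ trick.

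For $(1) \Rightarrow (3)$, $(2)$ shows every idempotent is a central projection, so $R$ is $*$-abelian. Writing $a = eu = ue$, I take $p = 1 - e$ and $u' = a - p = (u+1)e - 1$, and verify by direct expansion that $(u^{-1}+1)e - 1$ is a two-sided inverse of $u'$, exploiting centrality of $e$ and $e^2 = e$. Since $aR = eR$ and $pR = (1-e)R$, the intersection $aR \cap pR = 0$ is automatic. Conversely, for $(3) \Rightarrow (1)$, $*$-abelianness makes $p$ central, so $ap = pa$ lies in $aR \cap pR = 0$; multiplying $a = p + u$ by $1-p$ on the right yields $a = u(1-p) = ue = eu$ with $e = 1 - p$ a central projection, which is a strongly regular expression, and properness again follows from the $u^*eu$ argument applied to a generic $x$ with $x^*x = 0$.

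The main technical hurdle is constructing the unit summand in $(1) \Rightarrow (3)$: one has to guess the correct additive shift $p = 1 - e$ and then exhibit an explicit inverse whose verification relies on centrality of $e$ throughout the multiplication. The conceptually more delicate point is $(3) \Rightarrow (1)$, where strong regularity must be extracted from a decomposition that superficially resembles only a strong $*$-clean type hypothesis; the key observation is that $*$-abelianness forces $p$ to be central, which places $ap$ in $aR \cap pR$ and therefore annihilates it, after which strong regularity emerges for free.
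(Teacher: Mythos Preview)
Your argument is correct, and the logical skeleton of the step extracting information from~(3) is exactly the paper's: $*$-abelianness makes $p$ central, so $ap\in aR\cap pR=0$, and then $a=up=pu$. Where you genuinely diverge from the paper is in the other implications. The paper runs a cycle $(1)\Rightarrow(2)\Rightarrow(3)\Rightarrow(4)\Rightarrow(1)$ and outsources two of the steps: for $(1)\Rightarrow(2)$ it invokes Proposition~\ref{10} (to get $*$-regularity) together with \cite[Lemma~3]{Va} (which in an abelian $*$-regular ring forces $P(R)=Id(R)$), and for $(2)\Rightarrow(3)$ it quotes the Camillo--Khurana theorem \cite[Theorem~1]{Cam} to obtain the decomposition $a=p+u$ with $aR\cap pR=0$. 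You replace both citations by hand: the expansion of $(e-ee^*)^*(e-ee^*)$ using centrality of $e$ and $e^*$ gives $P(R)=Id(R)$ directly, and for~(3) you write down the explicit inverse $(u^{-1}+1)e-1$ of $(u+1)e-1$, bypassing Camillo--Khurana entirely. The payoff is a completely self-contained proof with no external dependencies; the cost is the extra verification of that inverse, which the paper avoids. Your $(4)\Rightarrow(1)$ is identical to the paper's.
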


\begin{proof}
$(1)\Rightarrow(2).$ In view of Proposition \ref{10}, $R$ is
$*$-regular. Since $R$ is also abelian, by \cite[Lemma 3]{Va}
$P(R)=Id(R)$.

$(2)\Rightarrow(3).$ Every abelian $*$-ring is $*$-abelian; and the
rest follows from \cite[Theorem 1]{Cam}.

$(3)\Rightarrow(4).$ Let $a\in R.$ Then there exists $1-p\in P(R)$
and $u\in U(R)$ such that $a=(1-p)+u$ and $aR\cap (1-p)R=0.$ Since
$R$ is $*$-abelian, $a(1-p)\in aR\cap R(1-p)=aR\cap (1-p)R=0.$
Hence, $a=ap=up=pu.$

$(4)\Rightarrow(1).$ The strong regularity of $R$ is clear. We
assume that $x^*x=0$ for $x\in R.$ Then $x=pu=up$ for some $p\in
P(R)$ and $u\in U(R).$ Obviously, $0=x^*x=u^*pu.$ Thus $p=0$, and so
$x=0.$ Therefore, the involution $*$ of $R$ is proper.
\end{proof}

A ring $R$ is said to \emph{have stable range $1$} provided that
whenever $aR+bR=R$ for any $a,~b\in R$, there exists $t\in R$ such
that $a+bt$ is a unit of $R.$ Next we give some characterizations of
unit regular and $*$-regular rings.
\begin{thm}\label{6}
Let $R$ be a $*$-ring. Then the following are equivalent$:$\\
$(1)$ $R$ is unit regular and the involution is proper.\\
$(2)$ $R$ is unit regular and $*$-regular.\\
$(3)$ For every $a\in R,$ $a=pu$ where $p\in P(R)$ and $u\in U(R)$.\\
$(4)$ For every $a\in R,$ $a=vq$ where $q\in P(R)$ and $v\in U(R)$.
\end{thm}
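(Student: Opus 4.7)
The plan is to close the cycle of equivalences using Proposition~\ref{10} for $(1)\Leftrightarrow(2)$, the involution for $(3)\Leftrightarrow(4)$, a direct check for $(3)\Rightarrow(1)$, and a construction that fuses the unit-regular and $*$-regular structure for the main implication $(2)\Rightarrow(3)$.

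The easy pieces are as follows. Proposition~\ref{10} already yields $(1)\Leftrightarrow(2)$. For $(3)\Rightarrow(4)$, given $a\in R$ I would apply (3) to $a^*$ to write $a^*=pu$ with $p\in P(R)$ and $u\in U(R)$; taking $*$ gives $a=u^*p$, a decomposition of the form demanded by (4), with $u^*\in U(R)$ and $p\in P(R)$. The converse is symmetric. The implication $(3)\Rightarrow(1)$ is equally short: a projection is idempotent, so $a=pu$ already exhibits $R$ as unit regular; and if $x^*x=0$ with $x=pu$, expansion yields $u^*pu=0$, whence $p=0$ and $x=0$, so the involution is proper.

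The main step is $(2)\Rightarrow(3)$. Fix $a\in R$. Since $R$ is unit regular, choose $u\in U(R)$ with $a=aua$; then $e:=au$ is an idempotent, $a=eu^{-1}$, and in particular $aR=eR$. Since $R$ is $*$-regular, pick a projection $p$ with $aR=pR$. From $eR=pR$ one reads off the compatibility relations $pe=e$ and $ep=p$. Now set
\[
v := (1-p+e)u^{-1}.
\]
Then
\[
pv = (p-p^2+pe)u^{-1} = eu^{-1} = a,
\]
so $a=pv$. It remains to verify that $v$ is a unit, and since $u^{-1}$ is already a unit this reduces to showing that $1-p+e$ is invertible. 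A short Peirce expansion using only $pe=e$, $ep=p$, $e^2=e$, $p^2=p$ gives
\[
(1-p+e)(1-e+p) = 1 = (1-e+p)(1-p+e),
\]
which completes the proof.

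The main obstacle is meshing the idempotent $e=au$ coming from unit regularity with the projection $p$ coming from $*$-regularity. The crucial observation is that both generate the same right ideal $aR=eR=pR$, which forces the one-sided identities $pe=e$ and $ep=p$; these in turn are precisely what makes $1-p+e$ a two-sided unit, and hence what lets the unit-regular decomposition of $a$ be converted into the desired projection-times-unit form.
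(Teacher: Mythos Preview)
Your proof is correct, and its overall architecture matches the paper's: reduce to the key implication $(2)\Rightarrow(3)$, write $a=eu^{-1}$ with $e$ idempotent using unit regularity, find a projection $p$ with $eR=pR$ using $*$-regularity, and then replace $e$ by $p$ at the cost of modifying the unit.

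The difference lies in how that last unit is produced. The paper observes that $eR+(1-p)R=R$ and invokes the fact that unit regular rings have stable range~$1$ \cite[Proposition~4.12]{Go} to obtain \emph{some} $t$ with $e+(1-p)t\in U(R)$; it then checks $p(e+(1-p)t)=pe=e$. You instead exploit \emph{both} relations $pe=e$ and $ep=p$ coming from $eR=pR$, choose the explicit element $1-p+e$ (which is the case $t=1$ of the paper's element), and verify its invertibility by a direct Peirce computation. Your route is more elementary and self-contained: it avoids the external citation to stable range theory and yields an explicit inverse $1-e+p$. The paper's route, on the other hand, makes visible the conceptual reason such a unit must exist without needing the second identity $ep=p$.
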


\begin{proof}
$(1)\Rightarrow(2)$ follows by Proposition \ref{10}.

$(2)\Rightarrow(3).$  For any $a\in R,$ there exists $e\in Id(R)$
and $w\in U(R)$ such that $a=ew$. Since $R$ is $*$-regular, $eR=pR$
for some projection $p\in R.$ Thus $e=pe.$ Note that $eR+(1-p)R=R.$
In view of \cite[Proposition 4.12]{Go}, $R$ has stable range $1$. So
there exists $t\in R$ satisfying $e+(1-p)t=v\in U(R)$. Clearly,
$pe=pv.$ It follows that $e=pe=pv$ and $a=ew=p(vw).$ Write $u=vw.$
Then $u\in U(R)$ and $a=pu.$

$(3)\Rightarrow(4).$ Given $a\in R,$ let $b=a^*.$ By hypothesis,
$b=pu$ with $p\in P(R)$ and $u\in U(R).$ Then $a=b^*=u^*p$. Taking
$v=u^*$ and $q=p,$ it follows that $v\in U(R),~q\in P(R)$ and
$a=vq.$

$(4)\Rightarrow(1).$ $R$ is clearly unit regular, so it suffices to
show that the involution is proper. Let $a\in R$ with $a^*a=0$. By
$(4),$ $a^*=vq$ for some $v\in U(R)$ and $q\in P(R).$ Thus
$0=a^*a=(vq)(qv^*)=vqv^*.$ So we have $q=0,$ which implies that
$a=0.$ We obtain the required result.
\end{proof}

\begin{df}
A $*$-ring $R$ is called $*$-unit regular if $R$ satisfies the
conditions in Theorem \ref{6}.
\end{df}

\begin{prop}\label{7}
Let $R$ be a $*$-ring and $n$ a positive integer. The following are
equivalent$:$\\
$(1)$ $M_n(R)$ is $*$-unit regular.\\
$(2)$ $R$ is unit regular and $a_1^*a_1+a_2^*a_2+\cdots +a_n^*a_n=0$
implies $a_i=0$ for all $i.$
\end{prop}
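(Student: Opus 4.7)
The plan is to invoke Theorem \ref{6} so that ``$M_n(R)$ is $*$-unit regular'' unpacks as the conjunction ``$M_n(R)$ is unit regular, and the transpose-$*$ involution on $M_n(R)$ is proper.'' The proposition then splits cleanly into two independent equivalences that I would handle separately.

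The first equivalence is the unit-regularity transfer: $M_n(R)$ is unit regular if and only if $R$ is unit regular. This is the well-known Morita invariance of unit regularity (Goodearl's monograph on von Neumann regular rings, for instance), so I would simply cite it; it matches the ``unit regular'' clause of $(2)$ verbatim.

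The second equivalence is the properness condition. For the forward direction, assume the involution on $M_n(R)$ is proper and let $a_1,\dots,a_n\in R$ satisfy $\sum_{i=1}^n a_i^*a_i=0$. I would form the matrix $A\in M_n(R)$ whose first column is $(a_1,\dots,a_n)^{T}$ and whose other columns vanish. A direct computation shows that every entry of $A^*A$ is zero except possibly the $(1,1)$-entry, which equals $\sum_{i=1}^n a_i^*a_i=0$. Hence $A^*A=0$, and properness forces $A=0$, so each $a_i=0$. For the reverse direction, assume the hypothesis of $(2)$ and suppose $A^*A=0$ for some $A=(a_{ij})\in M_n(R)$. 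Reading off the $(j,j)$-diagonal entry yields $\sum_{k=1}^n a_{kj}^*a_{kj}=0$, and applying the hypothesis column by column gives $a_{kj}=0$ for all $k,j$, i.e., $A=0$, so the involution on $M_n(R)$ is proper.

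I do not foresee any serious obstacle: the properness half is a short matrix computation built around the column-matrix trick, and the unit-regular half reduces to a citation. The only point requiring care is ensuring that the matrix constructed in the forward direction really does kill every off-diagonal entry of $A^*A$, which is immediate once one writes out the product.
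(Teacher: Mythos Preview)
Your proposal is correct and follows essentially the same route as the paper's proof: both invoke Theorem~\ref{6} to reduce to ``unit regular $+$ proper involution,'' cite Goodearl for the transfer of unit regularity between $R$ and $M_n(R)$, and handle properness via the column-matrix trick in one direction and the diagonal-entry computation in the other. The arguments coincide in all essential details.
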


\begin{proof}
$(1)\Rightarrow(2).$ Since $M_n(R)$ is $*$-unit regular, it is unit
regular. By \cite[Corollary 4.7]{Go}, $R$ is unit regular. Suppose
that $a_1^*a_1+a_2^*a_2+\cdots +a_n^*a_n=0$ for some $a_i\in R.$ Let
$A=\left(
\begin{smallmatrix}
a_1 & 0 &\cdots& 0\\
a_{2}& 0& \cdots &0\\
\vdots &\vdots &\ddots & \vdots\\
 a_n& 0& \cdots &0
\end{smallmatrix} \right)\in M_n(R)$. Then $A^*A=0.$ Since the involution $*$ of $M_n(R)$ is proper, $A=0.$
Thus $a_1=a_2=\cdots=a_n=0.$

$(2)\Rightarrow(1)$. As $R$ is a unit regular ring, so is $M_n(R)$
by \cite[Corollary 4.7]{Go}. The remaining proof is to show that the
involution $*$ of $M_n(R)$ is proper. Let $A=(a_{ij})\in M_n(R)$
with $A^*A=0.$ We have $$ a_{1j}^*a_{1j}+a_{2j}^*a_{2j}+\cdots
+a_{nj}^*a_{nj}=0
$$ where $j=1,\ldots,n.$ Then, the
hypothesis implies that $a_{ij}=0$ for all $i,j.$ Thus we have
$A=0$, and the proof is complete.
\end{proof}

Based on Proposition \ref{7}, we have the following examples.
\begin{exa}\label{8} Let $\mathbb{R}$, $\mathbb{C}$ be the
fields of real
numbers and complex numbers, respectively. Clearly, both $\mathbb{R}$ and $\mathbb{C}$ are unit regular.\\
$(1)$ Let $*: \mathbb{R}\rightarrow \mathbb{R}$ be an involution
defined by $x\mapsto x$. Then $M_n(\mathbb{R})$ is $*$-unit regular.\\
$(2)$ Define an involution $*$ of the ring $\mathbb{C}$ by $x\mapsto
\bar{x},$ where $\bar{x}$ is the conjugation of $x.$ It can be
directly
checked that $M_n(\mathbb{C})$ is $*$-unit regular.\\
$(3)$ Let $R=\mathbb{R} \times \mathbb{R}$ be a ring with the usual
addition and multiplication. An involution $*$ of $R$ is given by $x\mapsto x.$ Then $R$ is unit regular and $M_n(R)$ is $*$-unit regular.\\
$(4)$ Let $*: x\mapsto x$ be an involution of $\mathbb{Z}_2$. Then
$M_2(\mathbb{Z}_2)$ is not $*$-unit regular because $1^*\cdot
1+1^*\cdot 1=0$ but $1\neq 0.$
\end{exa}

In \cite{Nicholson99}, Nicholson asked whether a unit regular ring
is strongly clean, it is still an open problem. Va${\rm \check{s}}$
\cite{Va} raised a question if a unit regular and $*$-regular ring
is strongly $*$-clean, which is equivalent to asking whether a
$*$-unit regular ring is strongly $*$-clean. Here we give a negative
answer.

\begin{exa}
Let $R$ be a $*$-ring as defined in Example \ref{8}$(1),(2)$ and
$(3)$. Then $M_2(R)$ is $*$-unit regular. Nevertheless, by Corollary
\ref{3} $M_2(R)$ is not strongly $*$-clean.
\end{exa}

According to Example \ref{8}(4), the matrix ring of a $*$-unit
regular ring need not be $*$-unit regular. However, we have the
following result for its corner ring.

\begin{prop}
If $R$ is a $*$-unit regular ring, then $pRp$ is $*$-unit regular
for every $p\in P(R).$
\end{prop}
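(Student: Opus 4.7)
The plan is to invoke Theorem \ref{6}, by which a $*$-ring is $*$-unit regular precisely when it is unit regular and its involution is proper. Hence it suffices to verify both properties for the corner ring $pRp$ equipped with the inherited involution (recall that, since $p$ is a projection, the involution of $R$ restricts to an involution on $pRp$).

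For unit regularity: since $R$ is $*$-unit regular, it is in particular unit regular by Theorem \ref{6}, and unit regularity is well known to descend to every corner of the form $pRp$. One way to see this is via the identification $pRp \cong \mathrm{End}_R(pR)^{\mathrm{op}}$ together with the characterization of unit regular rings by internal cancellation of finitely generated projective modules; see, e.g., Goodearl \cite{Go}. Thus $pRp$ is unit regular.

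For properness of the involution on $pRp$: suppose $x \in pRp$ satisfies $x^*x = 0$ in $pRp$. Since the involution on $pRp$ is by definition the restriction of the involution on $R$, the same relation $x^*x = 0$ holds in $R$. By Theorem \ref{6}, the involution on $R$ is proper, so $x = 0$. Hence the inherited involution on $pRp$ is proper. Applying Theorem \ref{6} to the $*$-ring $pRp$ now yields that $pRp$ is $*$-unit regular, completing the argument.

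The only substantive step is the transfer of unit regularity to the corner; the involution-theoretic half is essentially automatic, because the involution on $pRp$ is literally the restriction of that on $R$. I therefore expect no further obstacle beyond citing the classical descent result for unit regularity. An alternative, more self-contained route would be to mimic the corner-ring manipulation used in the proof of Theorem \ref{1}: start from a decomposition $a = qu$ in $R$ with $q \in P(R)$ and $u \in U(R)$ provided by Theorem \ref{6}(3), exploit $a = pap$ to show $pq = qp = q$ so that $q \in P(pRp)$, and then adjust $u$ on the complementary corner $(1-p)R(1-p)$ to produce a unit of $pRp$; the bookkeeping there would be the only delicate point.
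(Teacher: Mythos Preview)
Your proof is correct and follows essentially the same approach as the paper: verify that unit regularity passes to corners via Goodearl \cite{Go} (the paper cites Corollary 4.7 specifically), observe that properness of the involution is inherited because the involution on $pRp$ is the restriction of that on $R$, and conclude by Theorem \ref{6}. The alternative self-contained route you sketch at the end is not used in the paper and is unnecessary here.
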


\begin{proof}
The ring $R$ is unit regular, by \cite[Corollary 4.7]{Go} $pRp$ is
unit regular as well where $p\in P(R)$. Let $a\in pRp~ (\subseteq
R)$ with $a^*a=0.$ Since $R$ is $*$-unit regular, we deduce that
$a=0.$ So the involution in $pRp$ is proper. Thus $pRp$ is $*$-unit
regular by Theorem \ref{6}.
\end{proof}

\bigskip\bigskip

 \centerline {\bf ACKNOWLEDGMENTS} This research is supported
by the National Natural Science Foundation of China (10971024), the
Specialized Research Fund for the Doctoral Program of Higher
Education (200802860024), and the Natural Science Foundation of
Jiangsu Province (BK2010393).
\bigskip

\end{document}